\documentclass[pdflatex,sn-mathphys-num]{sn-jnl}


\usepackage{graphicx}%
\usepackage{multirow}%
\usepackage{amsmath,amssymb,amsfonts}%
\usepackage{amsthm}%
\usepackage{mathrsfs}%
\usepackage[title]{appendix}%
\usepackage{xcolor}%
\usepackage{textcomp}%
\usepackage{manyfoot}%
\usepackage{booktabs}%
\usepackage[shortlabels]{enumitem}
\usepackage{float}

\usepackage{indentfirst}


\raggedbottom

\newtheorem{thm}{Theorem}[section]
\newtheorem{prop}[thm]{Proposition}
\newtheorem{lemm}[thm]{Lemma}

\theoremstyle{definition}

\newtheorem{rem}[thm]{Remark}

\newtheorem{problem}{Problem}

\theoremstyle{remark}





\DeclareMathOperator{\closure}{cl}



\newcommand{\bR}{\mathbb{R}}

\newcommand{\bN}{\mathbb{N}}

\newcommand{\bL}{\mathbb{L}}


\newcommand*{\norm}[1]{\left\Vert #1\right\Vert} 
\newcommand*{\abs}[1]{\left\vert #1 \right\vert} 
\newcommand*{\set}[2]{\left\{#1\colon #2\right\}} 

\newcommand*{\mtr}[3][X]{\left\vert #2 - #3\right\vert_{#1}}

\newcommand*{\ball}[2][r]{B(#2, #1)}
\newcommand*{\cball}[2][r]{\overline{B}(#2, #1)}


\newcommand*{\seq}[3][1]{\left({#2}_{#3}\right)_{#3 = #1}^\infty}


\newcommand*{\ocinv}[2]{\left(#1\,;#2\right]}


\DeclareMathOperator{\Lip}{Lip}
\DeclareMathOperator{\lip}{lip}
\newcommand*{\LLip}{\mathop{\mathbb{L}\mathrm{ip}}}

\newcommand*{\lipnorm}[1]{\left\Vert #1\right\Vert_{\mathrm{lip}}}

\newcommand*{\rtext}[1]{\text{\color{red}}}

\begin{document}

\title[Centered Takagi--van der Waerden functions and their Lipschitz derivatives]
{Centered Takagi--van der Waerden functions and their Lipschitz derivatives}

\author*[1,2]{\fnm{Oleksandr V.} \sur{Maslyuchenko}}\email{ovmasl@gmail.com}
\equalcont{These authors contributed equally to this work.}

\author[1]{\fnm{Ziemowit M.} \sur{Wójcicki}}\email{ziemo1@onet.eu}
\equalcont{These authors contributed equally to this work.}

\affil*[1]{
    \orgdiv{Institute of Mathematics}, 
    \orgname{University of Silesia in Katowice}, 
    \orgaddress{
        \street{Bankowa 12}, 
        \city{Katowice}, 
        \postcode{40-007}, 
        \country{Poland}
        }
    }

\affil[2]{
    \orgdiv{Department of Mathematics and Informatics}, 
    \orgname{Yuriy Fedkovych Chernivtsi National University}, 
    \orgaddress{
        \street{Kotsiubynskoho 2}, 
        \city{Chernivtsi}, 
        \postcode{58012}, 
        \country{Ukraine}
    }
}

\abstract{
    Using a modification of a generalized Takagi-van der Waerden
    function on a metric space we prove that for any closed subset  
    of a metric space without isolated points there exists a continuous 
    function such that its  big and  local Lipschitz derivatives are 
    equal to infinity exactly on this set. Moreover, if given space is
    hermetic (for example, if it is normed) then the little  Lipschitz 
    derivative has the same property. 
    
}

\keywords{
big Lipschitz derivative, little Lipschitz derivative, local
Lipschitz derivative, Takagi–van der Waerden function, hermetic space
}



\maketitle

\section{Introduction}
Famous Stepanoff's theorem \cite{St} asserts, that
any function $f\colon\bR^n\to\bR^m$ is differentiable
on almost every point of set $\set{x\in\bR^n}{\Lip f(x)<\infty}$,
where $\Lip f$ is so called big Lipschitz derivative of a function $f$.
Stepanoff's theorem is itself a generalization of Rademacher's theorem \cite{Radem, EvGa},
which states, that Lipschitz functions acting from an open subset of
$\bR^n$ to $\bR^m$, are almost everywhere differentiable.
Another two kinds of Lipschitz derivatives are known as the 
little Lipschitz derivative $\lip f$ and the local Lipschitz derivative $\LLip f$.
Strightforward connetction of Lipschitz derivatives to classical
notions of differentiability can be seen in the following fact:
if $f$ acts from an open subset $G$ of some normed space into an another normed space,
and there exists the Fr\'{e}chet derivative $f'$ of function $f$ then, indeed,
$\lip f(x)=\Lip f(x)=\norm{f'(x)}$ for all $x\in G$, see for example \cite{HerMa}.
It turns out that $\Lip f$ cannot be replaced by $\lip f$ in the Stepanoff's theorem,
as shown in \cite{BaloghCsornyei}. In general, working with $\lip f$
is more difficult, than with $\Lip f$.
Interest in the little Lipschitz derivative is of a different origin:
together with the big Lipschitz derivative they play a prominent
role in investigations of differentiable structures on measure
metric spaces, introduced by Cheeger in \cite{Cheg}.
They were particularly extensively used in follow-up
works by Keith in \cite{Keith1} and \cite{Keith2}.

    
Lately, there has been an interest in the following
problem: given a subset $A$ of a space $X$, is there
a continuous function $f$ defined on $X$, such that
Lipschitz derivatives of $f$ are infinite exactly on set $A$? 
For instance, in the paper \cite{BHRZ} of Buczolich, Hanson, Rmoutil, Z\"{u}rcher,
some partial answers to this problems were given for subsets of the real line.
In recent paper \cite{RmZu} Rmoutil and Z\"{u}rcher proved, that for any
$F_{\sigma\delta}$-set $A\subseteq\bR$ of Lebesgue measure zero, there exists
a continuous non-decreasing function $f\colon\bR\to\bR$ such that 
$\lip f(x)=\infty$ for $x\in A$ and $\lip f$ is finite otherwise.
On the other hand, in our previous paper \cite{MaWo}, we considered the case where $A=G$ 
is an open set in an arbitrary metric space and constructed a continuous function 
$f\colon X\to\bR$, such that $\Lip f=\LLip f=\infty$ exactly on $G$, by 
generalizing the classical nowhere-differentiable function of Takagi-van der Waerden.
Some additional assumptions on $X$ allowed us to
achieve also the equalities $\lip f=\Lip f=\LLip f=\infty$ strictly on $G$.

In this paper, we consider a closed subset $A$ of a metric space $X$
and we introduce the Takagi--van der Waerden function centered on $A$.
We prove, that if $A$ does not contain isolated points of $X$, then
$\Lip f(x)=\LLip f(x)=\infty$ if and only if $x\in A$, where $f$ is
the centered Takagi--van der Waerden function.
Moreover, if $X$ has some additional property (for example, if $X$ is a normed space),
then $\lip f=\Lip f=\LLip f=\infty$ exactly on $A$.

\section{Preliminary notions}
Throughout this paper $X$ will be a metric space. 
We denote by $\mtr\cdot\cdot$ the metric on $X$
and by $X^d$ the set of non-isolated points of
space $X$. By $\complement{A}$ for any set $A\subseteq X$
we always mean the complement of a set $A$ in a space $X$, i.e.,
$\complement{A}=X\setminus A$. 
The open and closed balls in $X$ with origin at $x\in X$ and radius $r>0$
will be denoted by $\ball[r]{x}$ and $\cball[r]{x}$ respectively.
The distance from a point $x$ to a set $A$ will be denoted with $d(x,A)$. 
The generalized ball is defined as $\ball[r]{A}=\big\{x\in X\colon d(x,A)<r\big\}$.
Through this paper, we will assume, that $\sup\varnothing = 0$.

For any function $f\colon X\to\bR$, we denote
\[
    \Lip^rf(x)=\sup_{u\in\ball[r]{x}}\frac{1}{r}\abs{f(x)-f(u)}, \; x\in X
\]
and
\[
    \lipnorm{f}=\sup_{\substack{u,v\in X \\ u\neq v}}\frac{\abs{f(u)-f(v)}}{\mtr{u}{v}}.
\]
Then, the \emph{big Lipschitz derivative} $\Lip f(x)$ of
a function $f$ at point $x\in X$ is defined by the formula
\begin{equation}\label{eq:bigLipDef}
    \Lip f(x)=\limsup_{r\to 0^{+}}\Lip^rf(x),
\end{equation}
and the \emph{little Lipschitz derivative} $\lip f(x)$ of a
function $f$ at point $x\in X$ is defined by the formula
\begin{equation}\label{eq:smallLipDef}
    \lip f(x)=\liminf_{r\to 0^{+}}\Lip^rf(x),
\end{equation}

The \emph{local Lipschitz derivative} $\LLip f(x)$ of a
function $f$ at point $x\in X$ is defined by the formula
\begin{equation}\label{eq:localLipDef}
    \LLip f(x)=\limsup_{(u,v)\to(x,x)}\frac{\abs{f(u)-f(v)}}{\mtr{u}{v}} = \inf_{r>0}\lipnorm{f\big\vert_{\ball[r]{x}}}.
\end{equation}
Since we have assumed that $\sup\varnothing=0$, 
the equality $\lip f(x)=\Lip f(x)=\LLip f(x)=0$ holds for any
isolated point $x$. 

Let us note the important identity which will be often useful:
\begin{equation}\label{eq:bigLipEasierDef}
    \Lip f(x)=\limsup_{u\to x}\frac{\abs{f(u)-f(x)}}{\mtr[X]{u}{x}}.
\end{equation}
Indeed, the equation \eqref{eq:bigLipEasierDef} is 
often used as definition of the big Lipschitz derivative.
For proof of equivalence of \eqref{eq:bigLipEasierDef} 
and \eqref{eq:bigLipDef} see \cite[Proposition 2.1]{MaWo}.
Analogical equality for $\lip f$ does not hold.
To be more precise,
\[
\liminf\limits_{u\to x}\frac{\abs{f(u)-f(x)}}{\mtr[X]{u}{x}}\leq \lip f(x)
\]
and the inequality can be strict.

Now we define certain special sets associated with a function $f$ as follows
\begin{itemize}
    \item $L^{\infty}(f)=\set{x\in X}{\Lip f(x)=\infty}$,
    \item $\ell^{\infty}(f)=\set{x\in X}{\lip f(x)=\infty}$,
    \item $\bL^{\infty}(f)=\set{x\in X}{\LLip f(x)=\infty}$.
\end{itemize}
It is important to note that passing to an equivalent metric
can change the sets $L^{\infty}(f)$, $\ell^{\infty}(f)$ and $\bL^{\infty}(f)$.
Take $X=\bR$, $f(x)=x$, $x\in X$ and consider metric given as 
$\mtr[1]{x}{y}=\abs{x^3-y^3}$, $x,y\in\bR$.
Obviously, metric $\mtr[1]{\cdot}{\cdot}$ is equivalent to standard Euclidean metric on $\bR$.
In Euclidean metric, we have $\lip f(0)=1$.
On the other hand, $\lip f(0)=+\infty$, when metric $\mtr[1]{\cdot}{\cdot}$ is considered.
In the first case, $0\notin\ell^{\infty}(f)$. In the second case, $0\in\ell^{\infty}(f)$.
\begin{prop}[{\cite[Proposition 3.4.]{MaWo}}]\label{prop:Linfty_sets_closed}
    For any continuous function ${f\colon X\to\bR}$ the set $\bL^{\infty}(f)$
    is closed,  $L^{\infty}(f)$ is a $G_\delta$-set in $X$ and 
    $\ell^\infty(f)$ is an $F_{\sigma\delta}$-set in $X$.
\end{prop}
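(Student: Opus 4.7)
I would handle the three assertions independently.

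The closedness of $\bL^\infty(f)$ follows from the right-hand expression in \eqref{eq:localLipDef}. If $\LLip f(x_0)<\infty$, I would pick $r>0$ with $\lipnorm{f|_{\ball[r]{x_0}}}<\infty$; then for every $y\in\ball[r/2]{x_0}$ the triangle inequality gives $\ball[r/2]{y}\subseteq\ball[r]{x_0}$, so $\LLip f(y)\le\lipnorm{f|_{\ball[r/2]{y}}}<\infty$. Hence the complement of $\bL^\infty(f)$ is open.

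For $L^\infty(f)$ I plan to invoke the identity \eqref{eq:bigLipEasierDef}. Writing
\[
\{x:\Lip f(x)>n\}=\bigcap_{k\in\bN}O_{k,n},\qquad O_{k,n}=\set{x\in X}{\exists\,u,\; 0<\mtr{u}{x}<1/k,\; |f(u)-f(x)|>n\cdot\mtr{u}{x}},
\]
each $O_{k,n}$ is open because any witness $u$ for $x_0$ remains a witness for all $y$ sufficiently close to $x_0$, by continuity of $f$ together with continuity of the distance. Therefore $\{\Lip f>n\}$ is $G_\delta$, and so is $L^\infty(f)=\bigcap_n\{\Lip f>n\}$.

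The $F_{\sigma\delta}$-assertion for $\ell^\infty(f)$ is the most delicate. My plan is to show that $\{x:\lip f(x)\ge n\}$ is $F_\sigma$ for each $n$, so that $\ell^\infty(f)=\bigcap_n\{\lip f\ge n\}$ becomes $F_{\sigma\delta}$. Unwinding the liminf, membership is equivalent to
\[
\forall m\in\bN\;\exists k\in\bN\;\forall r\in(0,1/k):\;\Lip^r f(x)\ge n-1/m.
\]
The key structural input is that $r\mapsto\phi_x(r):=r\cdot\Lip^r f(x)=\sup_{u\in\ball[r]{x}}|f(u)-f(x)|$ is non-decreasing and left-continuous in $r$ (since $\bigcup_{r'<r}\ball[r']{x}=\ball[r]{x}$); this lets me collapse the universal quantifier over $r\in(0,1/k)$ to one over rational $r\in(0,1/k)$, making the inner intersection countable. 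It then remains to identify each set $\{x:\Lip^r f(x)\ge\gamma\}$ as closed.

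The hard part is precisely this last closedness. A naive triangle-inequality estimate, applied to $x_j\to x$ with $\Lip^r f(x_j)\ge\gamma$, only produces witness points in the slightly larger ball $\ball[r+\varepsilon]{x}$ rather than in $\ball[r]{x}$ itself. Closing this gap will require a careful perturbation argument using continuity of $f$, for instance by passing through closed balls and exploiting $\bigcap_{\varepsilon>0}\cball[r+\varepsilon]{x}=\cball[r]{x}$, or by replacing $\Lip^r f$ with a suitable regularisation that absorbs such boundary effects.
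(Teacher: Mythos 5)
Your treatments of $\bL^{\infty}(f)$ and $L^{\infty}(f)$ are correct. One small slip in the second part: the displayed identity holds only as the sandwich $\{\Lip f>n\}\subseteq\bigcap_{k}O_{k,n}\subseteq\{\Lip f\ge n\}$ (a point whose difference quotients exceed $n$ at all scales may still have $\Lip f(x)=n$), but this is harmless, since intersecting over $n$ still yields $L^{\infty}(f)=\bigcap_{n}\bigcap_{k}O_{k,n}$, a $G_\delta$-set.

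The gap you flag in the third part is genuine, and the step you hope to supply is in fact false: for fixed $r$ and $\gamma$ the set $\set{x\in X}{\Lip^{r}f(x)\ge\gamma}$ need not be closed. The map $x\mapsto\Lip^{r}f(x)$ is only lower semicontinuous, so its sublevel sets are closed while its superlevel sets can fail to be. Concretely, let $X=\{0\}\cup\set{1/j}{j\in\bN}\cup\{2\}\subseteq\bR$ with the usual metric and let $f$ be $1$ at the point $2$ and $0$ elsewhere (continuous, since $2$ is isolated in $X$); then $2\in\ball[2]{1/j}$ gives $\Lip^{2}f(1/j)\ge\tfrac12$ for every $j$, while $2\notin\ball[2]{0}$ gives $\Lip^{2}f(0)=0$, so $\set{x\in X}{\Lip^{2}f(x)\ge\tfrac12}$ is not closed; replacing open by closed balls does not repair this (move the witnesses to the points $2+1/j$). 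The correct repair is to prove closedness not radius-by-radius but for the whole set $E_{k,\gamma}=\set{x\in X}{\Lip^{r}f(x)\ge\gamma\ \text{for all}\ r\in(0,1/k)}$: if $x_{j}\to x$ with $x_{j}\in E_{k,\gamma}$ and $0<r'<r<1/k$, pick $j$ with $\mtr{x_{j}}{x}<r-r'$, so that $\ball[r']{x_{j}}\subseteq\ball[r]{x}$; a near-optimal point $u$ for $\Lip^{r'}f(x_{j})\ge\gamma$ then gives $\sup_{v\in\ball[r]{x}}\abs{f(v)-f(x)}\ge\gamma r'-\varepsilon-\abs{f(x_{j})-f(x)}$, and letting $j\to\infty$, $\varepsilon\to0^{+}$ and finally $r'\uparrow r$ (this is where your left-continuity observation belongs) yields $\Lip^{r}f(x)\ge\gamma$, i.e.\ $x\in E_{k,\gamma}$. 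Since your quantifier analysis shows $\{\lip f\ge n\}=\bigcap_{m}\bigcup_{k}E_{k,\,n-1/m}$, the closedness of the $E_{k,\gamma}$ gives that each $\{\lip f\ge n\}$ is $F_{\sigma\delta}$ (indeed $\bigcup_k E_{k,\,n-1/m}$ is $F_\sigma$), and hence $\ell^{\infty}(f)=\bigcap_n\{\lip f\ge n\}$ is $F_{\sigma\delta}$; so your decomposition is fine, but the closed sets feeding it must be these joint-in-$r$ sets rather than the fixed-radius ones.
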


Hereby, we consider the following 
\begin{problem}[$\ell$-$L$-$\bL$-problem]
    Is a subset $A$ of metric space closed 
    if and only if there exists a continuous function $f\colon X\to\bR$,
    such that $\ell^{\infty}(f)=L^{\infty}(f)=\bL^{\infty}(f)$? 
\end{problem}

We will at first tackle the following partial case:
\begin{problem}[$L$-$\bL$-problem]
    Let $X$ be a metric space and $A\subseteq X^d$. 
    Is $A$ closed if and only if there exists a 
    continuous function $f\colon X\to\bR$,
    such that $L^{\infty}(f)=\bL^{\infty}(f)$? 
\end{problem}

\section{Centered Takagi-van der Waerden functions}

\paragraph{Maximal separated sets.}
A subset $S$ of a metric space $X$ is called
\begin{itemize}
    \item \emph{$\varepsilon$-dense} in $X$ if for any $x\in X$
    there exists $s\in S$ such that $\mtr{x}{s}<\varepsilon$.
    \item \emph{$\varepsilon$-separated} if $\mtr{s}{t}\geq\varepsilon$
for any $s,t\in S$.
    \item \emph{maximal $\varepsilon$-separated} if it is $\varepsilon$-separated 
    in $X$ and for any $\varepsilon$-separated set $T\supseteq S$ we have that $T=S$.
\end{itemize}

\begin{rem}\label{rem:epsilon_dense_lip}
    For any subset $S$ of a metric space $X$
    the function ${x\mapsto d(x,S)}$ is $1$-Lipschitz.
    Moreover, $S$ is $\varepsilon$-dense if and only if $d(x,S)<\varepsilon$ for any $x\in X$.
\end{rem}

Using Teichm\"{u}ller-Tukey Lemma or equivalent tools
(like Kuratowski--Zorn Lemma) we can easily prove the following
\begin{prop}
    For any $\varepsilon$-separated set $S_0$ in $X$ there
    exists a maximal $\varepsilon$-separated set $S$ in $X$
    containing $S_0$.
\end{prop}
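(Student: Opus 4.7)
The plan is to apply the Kuratowski--Zorn Lemma to the family
\[
    \mathcal{F}=\set{T\subseteq X}{T\supseteq S_0 \text{ and } T \text{ is }\varepsilon\text{-separated}},
\]
partially ordered by inclusion. First, I would observe that $\mathcal{F}$ is nonempty because $S_0\in\mathcal{F}$. Then the main verification is that every chain in $\mathcal{F}$ admits an upper bound in $\mathcal{F}$.

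So let $\mathcal{C}\subseteq\mathcal{F}$ be a chain. I would set $T=\bigcup\mathcal{C}$; clearly $T\supseteq S_0$, so it remains to check that $T$ is $\varepsilon$-separated. Given any two distinct points $s,t\in T$, pick $T_s,T_t\in\mathcal{C}$ with $s\in T_s$ and $t\in T_t$. Since $\mathcal{C}$ is a chain, one of these sets contains the other, say $T_s\subseteq T_t$; then $s,t\in T_t$, and because $T_t$ is $\varepsilon$-separated we obtain $\mtr{s}{t}\geq\varepsilon$. Thus $T\in\mathcal{F}$ and $T$ is an upper bound for $\mathcal{C}$.

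By the Kuratowski--Zorn Lemma, $\mathcal{F}$ possesses a maximal element $S$. By construction $S\supseteq S_0$ and $S$ is $\varepsilon$-separated. To conclude maximality in the sense of the definition, I would note that any $\varepsilon$-separated set $T$ with $T\supseteq S$ automatically satisfies $T\supseteq S_0$, so $T\in\mathcal{F}$; then maximality of $S$ inside $\mathcal{F}$ yields $T=S$. There is essentially no obstacle here---the only conceptual point is to observe that $\varepsilon$-separation passes to unions of chains, which is immediate from the chain property. (Alternatively, one could invoke the Teichm\"uller--Tukey Lemma directly, since being $\varepsilon$-separated is a property of finite character: a set is $\varepsilon$-separated iff each of its two-element subsets is.)
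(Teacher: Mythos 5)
Your proof is correct and follows exactly the route the paper indicates: the paper gives no detailed argument but explicitly says the result follows ``easily'' from the Teichm\"uller--Tukey or Kuratowski--Zorn Lemma, and your Zorn-style verification (union of a chain of $\varepsilon$-separated sets is $\varepsilon$-separated) together with the finite-character remark fills in precisely that sketch. No issues.
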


Another easy but important property 
of maximal $\varepsilon$-separated sets
is given by the following
\begin{prop}
    Any $\varepsilon$-separated subset $S$ of a space $X$ is
    a maximal $\varepsilon$-separated set in $X$ if
    and only if $S$ is $\varepsilon$-dense in $X$.
\end{prop}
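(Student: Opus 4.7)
The statement is an equivalence between maximality and $\varepsilon$-density among $\varepsilon$-separated sets, and both directions admit short, direct arguments. I would prove each implication separately; in both cases the key move is converting between the existence of a point at distance $\geq \varepsilon$ from $S$ and the existence of a strict $\varepsilon$-separated extension of $S$. Throughout, I read the definition of $\varepsilon$-separated in the usual way, namely that $\mtr{s}{t} \geq \varepsilon$ holds for all distinct $s,t \in S$.

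For the direction ``maximal $\Rightarrow$ $\varepsilon$-dense'' I would proceed by contrapositive. Suppose $S$ is not $\varepsilon$-dense. By Remark~\ref{rem:epsilon_dense_lip} there exists $x \in X$ with $d(x,S) \geq \varepsilon$, which in particular forces $x \notin S$. Then for every $s \in S$ we have $\mtr{x}{s} \geq \varepsilon$, so combined with the fact that $S$ itself is $\varepsilon$-separated, the set $T := S \cup \{x\}$ is $\varepsilon$-separated and strictly larger than $S$. This contradicts maximality, proving the contrapositive.

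For the direction ``$\varepsilon$-dense $\Rightarrow$ maximal'' I would argue directly. Assume $S$ is $\varepsilon$-separated and $\varepsilon$-dense, and let $T \supseteq S$ be any $\varepsilon$-separated set; I want $T = S$. Pick $t \in T$. By $\varepsilon$-density of $S$ there is $s \in S$ with $\mtr{t}{s} < \varepsilon$. Since $s \in S \subseteq T$ and $T$ is $\varepsilon$-separated, the only way two of its points can lie at distance less than $\varepsilon$ is if they coincide; hence $t = s \in S$. Thus $T \subseteq S$, giving $T = S$ as required.

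The argument has no real obstacle: it is essentially bookkeeping with the two defining conditions. The only point that warrants care is the convention on what $\varepsilon$-separated means for coincident points, and, implicitly, that $\varepsilon > 0$ so that $d(x,S) \geq \varepsilon$ precludes $x \in S$; both are standard and already in force in the paper.
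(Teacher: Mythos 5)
Your proof is correct, and since the paper states this proposition without proof (treating it as an easy consequence of the definitions), your two-implication argument --- extending a non-dense $S$ by a point at distance $\geq\varepsilon$, and collapsing any $\varepsilon$-separated $T\supseteq S$ onto $S$ via density --- is exactly the standard reasoning the authors have in mind. Your remark about reading ``$\varepsilon$-separated'' as a condition on \emph{distinct} points is also the right call, since the paper's literal phrasing ``for any $s,t\in S$'' would otherwise be vacuous for nonempty sets.
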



\paragraph{Definition of centered Takagi--van der Waerden function.}

Let $A\subseteq X$ be a closed set, $a>b\geq 1$ and $c>0$ be some real numbers.
We define a \emph{Takagi--van der Waerden function of a type $(a,b,c)$
centered at a set $A$} as a function $f\colon X\to\bR$ such that
\[
    f(x)=\sum_{n=0}^{\infty}b^nd(x,T_n), \; x\in X,
\]
where 
$S_n$ is a maximal $\frac{1}{a^n}$-separated set in $G_n=\ball[\frac{c}{a^n}]{A}$ and $T_n=\complement{G_n}\cup S_n$ for any  $n\in\bN$.
We will also  shortly call $f$ a $TW$-function of a given type.
Note, that the sets $T_n$ defined above are $\frac{1}{a^n}$-dense in $X$.
Furthermore, if the sequence of sets $\seq[0]{T}{n}$ is ascending,
we will say that $f$ is \emph{Takagi-van der Waerden function 
of a monotone type $(a,b,c)$ centered at $A$}.

In fact, we can always construct a sequence 
$\seq[0]{T}{n}$ recursively to be ascending.
Denote $F_n=\complement{G_n}$.
Obviously,  $F_n\subseteq F_{n+1}$.
If $S_n$ is $\frac{1}{a^n}$-separated in $G_n$ then $S_n'=S_n\cap G_{n+1}$ is $\frac{1}{a^{n+1}}$-separated in $G_{n+1}$. 
Therefore, there exists a maximal  $\frac{1}{a^{n+1}}$-separated set $S_{n+1}\supseteq S_n'$ in $G_{n+1}$. 
Thus, 
\begin{align*}
    T_n=F_n\cup S_n&\subseteq F_{n+1}\cup S_{n}\\
    &= F_{n+1}\cup (S_{n}\setminus F_{n+1})=F_{n+1}\cup(S_n\cap G_{n+1})\\
    &=F_{n+1}\cup S_n'\subseteq F_{n+1}\cup S_{n+1}=T_{n+1}.
\end{align*}

We will denote by $s_n$ and $r_n$ be the $n$-th 
partial sum and the $n$-th reminder of $f$ respectively:
\[
    s_n(x)=\sum_{k=0}^{n-1}b^kd(x,T_k) \text{ and } r_n(x)=\sum_{k=n}^{\infty}b^kd(x,T_k), \text{ for } x\in X.
\]

\begin{prop}\label{prop:TWfunction_is_Lipschitz}
	Let $X$ be a metric space and let $a>b\geq 1$, $c>0$. 
    Assume, that $f$ is TW-function of type $(a,b,c)$
    centered at a closed set $A\subseteq X$.
    Then for any $n\in\bN$ the following 
    conditions hold:
    \begin{enumerate}[label=\normalfont{(\roman*)}]
		\item $f\colon X\to\mathbb{R}$ is a continuous function such that $0\le f(x)\le\frac a{a-b}$, $x\in X$;
		\item $r_n\colon X\to\mathbb{R}$ is a continuous function such that $0\le r_n(x)\le\frac {b^{n}}{(a-b)a^{n-1}}$, $x\in X$.
            Moreover, we have $r_n(x)=0$ for all $x\in F_n=\complement{\ball[c/a^n]{A}}$;
		\item if $b>1$ then $s_n\colon X\to\mathbb{R}$ is a Lipschitz function with the constant $\frac{b^n}{b-1}$.
	\end{enumerate} 
\end{prop}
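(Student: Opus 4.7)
The plan is to deduce all three items from two elementary properties of the sets $T_n$ that are essentially already recorded in the text: (a) $T_n$ is $\frac{1}{a^n}$-dense in $X$, so that by Remark~\ref{rem:epsilon_dense_lip} one has the pointwise bound $0\le d(x,T_n)\le \frac{1}{a^n}$ and the function $d(\cdot,T_n)$ is $1$-Lipschitz; and (b) the sequence $(F_n)_{n=0}^{\infty}$ is increasing (because the radii $c/a^n$ decrease with $n$), with $F_n\subseteq T_n$ by construction.

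For (ii), I would first combine (a) with the assumption $a>b$ to obtain the uniform geometric majorant
\[
0\le r_n(x) \;\le\; \sum_{k=n}^{\infty}b^k\cdot\frac{1}{a^k} \;=\; \frac{(b/a)^n}{1-b/a} \;=\; \frac{b^n}{(a-b)\,a^{n-1}},\qquad x\in X.
\]
Continuity of $r_n$ then follows at once from the Weierstrass M-test applied to the series of continuous functions $b^k d(\cdot,T_k)$. The vanishing of $r_n$ on $F_n$ drops out of (b): for $x\in F_n$ and every $k\ge n$ we have $x\in F_k\subseteq T_k$, so $d(x,T_k)=0$ and hence $r_n(x)=0$. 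Specializing (ii) to $n=0$ (where $f=r_0$ and $\frac{b^0}{(a-b)a^{-1}}=\frac{a}{a-b}$) then yields (i).

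For (iii), I would use that each summand $b^k d(\cdot,T_k)$ is $b^k$-Lipschitz by (a), and that Lipschitz constants add under finite sums, to obtain
\[
\lipnorm{s_n} \;\le\; \sum_{k=0}^{n-1}b^k \;=\; \frac{b^n-1}{b-1} \;\le\; \frac{b^n}{b-1}.
\]

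I do not foresee any real obstacle: the proposition collapses to a geometric-series estimate once the $\frac{1}{a^n}$-density of $T_n$ is in hand. The only mildly non-obvious point is that the vanishing statement $r_n|_{F_n}\equiv 0$ rests on the monotonicity of $(F_n)$ rather than on any monotonicity of the $T_n$ themselves, which is why no ``monotone type'' hypothesis is needed here.
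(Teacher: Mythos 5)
Your proposal is correct and follows essentially the same route as the paper's proof: the geometric majorant $b^k d(\cdot,T_k)\le (b/a)^k$ coming from the $\tfrac{1}{a^k}$-density of $T_k$, uniform convergence for continuity, $F_n\subseteq F_k\subseteq T_k$ for the vanishing of $r_n$ on $F_n$, the case $n=0$ for (i), and summing the $1$-Lipschitz distance functions for (iii). Your direct appeal to the density of $T_k$ in all of $X$ merely streamlines the paper's case split $x\in G_k$ versus $x\notin G_k$, and your stated constant $\frac{b^n}{(a-b)a^{n-1}}$ is the correct one (the paper's proof contains a typo $b^{n-1}$ in the denominator at that point).
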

\begin{proof}
    All conditions (i)-(iii) are trivially true for an isolated point $x$.
    Thus, in the following we consider $x\in X^d$.
    
	(ii). 
    Fix $n\in\bN_0$. If $x\in F_n$, then $d(x,T_k)=0$ since $F_n\subseteq F_k\subseteq T_k$
    for any $k\geq n$. Thus, $r_n(x)=0$.

    Consider $k\in\bN_0$ and let $x\in G_k$. 
    Therefore, $d(x,S_k)<\frac{1}{a^k}$, by Remark~\ref{rem:epsilon_dense_lip}.
    Since $S_k\subseteq T_k$, we have
    \[
        b^kd(x,T_k)=b^k\inf_{t\in T_k}\mtr{x}{t}\leq b^k\inf_{s\in S_k}\mtr{x}{s}=b^kd(x,S_k)<\left(\frac{b}{a}\right)^k.
    \]
    In the case when $x\notin G_k$ we have $b_kd(x,T_k)=0<\left(\frac{b}{a}\right)^k$. 
    Then, we conclude that
    \[
        0\leq\sum_{k=n}^\infty b^kd(x,T_k)\leq\sum_{k=n}^\infty\left(\frac{b}{a}\right)^k<\infty,
    \]
    so the series $r_n(x)=\sum_{k=n}^\infty b^kd(x,T_k)$ is uniformly convergent for any $x$
    and as a result, function $r_n$ is continuous.
    Moreover, we have the following estimation
    \[
        r_n(x)\leq\sum_{k=n}^\infty\left(\frac{b}{a}\right)^k=\frac{\left(\frac{b}{a}\right)^n}{1-\frac{b}{a}}=\frac{b^n}{(a-b)b^{n-1}}.
    \]

	(i). From $f=r_0$ we conclude that $(ii)\Rightarrow(i)$.
	
    (iii). By Remark \ref{rem:epsilon_dense_lip} we have
    \[
        \vert s_n(x)-s_n(y)\vert\leq \sum\limits_{k=0}^{n-1}b^k\big\vert d(x,S_k)-d(y,S_k)\big\vert
	\leq
    \]
    \[
        \leq\sum\limits_{k=0}^{n-1}b^k\mtr{x}{y}=\frac{b^n-1}{b-1}\mtr{x}{y}\leq \frac{b^n}{b-1}\mtr{x}{y}
    \]
	for any $x,y\in X$.
\end{proof}

\section{Solution of $L$-$\bL$-problem}

\begin{thm}\label{thm:first_problem}
    Let $X$ be a metric space, $A\subseteq X^d$ be a closed set,
    $a>b>2$, $c\geq 1$ be real numbers, and $f\colon X\to\bR$ be a TW-function 
    of monotone type $(a,b,c)$ centered at $A$. Then 
    $L^{\infty}(f)=\bL^{\infty}(f)=A$.
\end{thm}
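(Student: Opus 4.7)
Since \(\Lip f(x)\le \LLip f(x)\) pointwise, the inclusion \(L^{\infty}(f)\subseteq \bL^{\infty}(f)\) is automatic; the work splits into proving \(\bL^{\infty}(f)\subseteq A\) and \(A\subseteq L^{\infty}(f)\). The first is the easy direction, deduced from Proposition~\ref{prop:TWfunction_is_Lipschitz}. The second, which forces \(\Lip f\) to be infinite on \(A\), is the substantive part and is where the main difficulty lies.

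For \(\bL^{\infty}(f)\subseteq A\), if \(x\notin A\), closedness of \(A\) gives \(d(x,A)>0\); picking \(n\) with \(c/a^n<d(x,A)/2\) and \(\delta:=d(x,A)-c/a^n>0\), any \(u\in\ball[\delta]{x}\) has \(d(u,A)>c/a^n\), hence \(u\in F_n\). By Proposition~\ref{prop:TWfunction_is_Lipschitz}(ii), \(r_n\equiv 0\) on \(\ball[\delta]{x}\), so \(f=s_n\) there, and Proposition~\ref{prop:TWfunction_is_Lipschitz}(iii) gives \(\LLip f(x)\le \tfrac{b^n}{b-1}<\infty\).

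For \(A\subseteq L^{\infty}(f)\), fix \(x\in A\). Two observations underpin the construction. First, \(\ball[c/a^n]{x}\subseteq G_n\) yields \(d(x,F_n)\ge c/a^n\ge 1/a^n\), so combined with the \(1/a^n\)-separation of \(S_n\), every \(t\in T_n\setminus\{x\}\) sits at distance at least \(1/a^n\) from \(x\). Second, by monotonicity \(T_n\subseteq T_k\) for \(k\ge n\), any \(p\in T_n\) automatically satisfies \(r_n(p)=0\). I then split into two cases. In \emph{Case A}, \(x\notin T_n\) for infinitely many \(n\); along such \(n\), since \(d(x,S_n)<1/a^n\le d(x,F_n)\), one can pick \(p_n\in S_n\) with \(d(x,p_n)\) as close to \(\delta_n:=d(x,T_n)>0\) as desired, and \(r_n(p_n)=0\). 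The identity \(f(p_n)-f(x)=[s_n(p_n)-s_n(x)]-r_n(x)\) combined with \(r_n(x)\ge b^n\delta_n\), the Lipschitz bound from Proposition~\ref{prop:TWfunction_is_Lipschitz}(iii), and the reverse triangle inequality gives
\[
\frac{|f(p_n)-f(x)|}{d(p_n,x)}\ge\frac{b^n(b-2)+1}{b-1}-o(1),
\]
which tends to \(\infty\) since \(b>2\). In \emph{Case B}, \(x\in T_n\) (hence \(x\in S_n\)) for all \(n\ge n_0\), so \(d(x,T_k)=0\) for every \(k\ge n_0\). Using \(x\in X^d\), pick \(y_n\) with \(0<d(y_n,x)<1/(2a^n)\); the first observation then forces \(d(y_n,T_k)=d(y_n,x)\) for every \(n_0\le k\le n\). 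Decomposing \(f(y_n)-f(x)=\sum_k b^k[d(y_n,T_k)-d(x,T_k)]\), estimating the \(k<n_0\) terms by the Lipschitz bound and the \(k>n\) terms by non-negativity, I obtain
\[
\frac{f(y_n)-f(x)}{d(y_n,x)}\ge\frac{b^{n+1}-2b^{n_0}+1}{b-1}\to\infty.
\]

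The main obstacle is keeping the remainder \(r_n\) from swamping the \(b^n\)-scale signal in \(s_n\). The trick is to choose the test point \(u_n\) so that either \(r_n(u_n)=0\) (Case~A, using monotonicity of \((T_k)\)) or \(r_n(x)=0\) (Case~B, using that \(x\) eventually lies in every \(T_k\)); this leaves only the other side of \(f(u_n)-f(x)\) to carry a controllable correction. The hypothesis \(b>2\) enters to guarantee \(b^n-(b^n-1)/(b-1)\) grows unboundedly, while \(c\ge 1\) secures the clean \(1/a^n\)-separation of \(T_n\) in a neighbourhood of \(x\).
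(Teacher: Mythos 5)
Your argument is correct and follows essentially the same route as the paper's: the same decomposition $f=s_n+r_n$ via Proposition~\ref{prop:TWfunction_is_Lipschitz}, the same two-case analysis at points of $A$ (your Case~A is the paper's case $x\notin\bigcup_n S_n$, taking a near-nearest point of $S_n$ and using $r_n(p_n)=0$ by monotonicity; your Case~B is the paper's case $x\in S_{n_0}$, mildly refined by summing the whole block $n_0\le k\le n$ instead of only the $n$-th term), and the same local argument giving $\bL^{\infty}(f)\subseteq A$. One small imprecision: your blanket claim that every $t\in T_n\setminus\{x\}$ lies at distance at least $1/a^n$ from $x$ needs $x\in S_n$ (otherwise $1/a^n$-density of $S_n$ in $G_n$ puts points of $S_n$ closer), but you only invoke it in Case~B where $x\in S_k$ holds, so the proof stands.
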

\begin{proof}
    We will denote $\varphi_n(x)=d(x,T_n)$.
    Fix $x\in A$. We consider two cases.

    \textit{The first case:} $\displaystyle x\in\bigcup_{n=0}^\infty S_n$.
    Let $n_0$ be such that $x\in S_{n_0}$.
    Since $x\in A\subseteq X^d$, for any $n\in\bN$ there exists
    $u_n\in\ball[\frac{1}{2a^n}]{x}\setminus\{x\}$. Denote $\rho_n=\mtr{u_n}{x}$, $n\in\bN$.
    We have $0<\rho_n<\frac{1}{2a^n}$ and $a>1$, so $\displaystyle\lim_{n\to\infty}\rho_n=0$.
    Moreover, for any $n\geq n_0$ we have $x\in S_{n_0}\subseteq T_{n_0}\subseteq T_n$,
    which implies that $\varphi_n(x)=0$. 
    Hence, we obtain the equality
    \begin{equation}\label{eq:qwerty}
        r_n(x)=\sum_{k=n}^{\infty}b^k\varphi_k(x)=0, \; n\geq n_0.
    \end{equation}
    Fix $n\geq n_0$.
    We will show, that $\varphi_n(u_n)=\rho_n$. 
    Let $t\in T_n$. Then, either $t\in\complement{G_n}$ or $t\in S_n$.
    Clearly, in the first case, we have $\mtr{t}{x}\geq\frac{c}{a^n}\geq\frac{1}{a^n}$.
    In the second case we also have $\mtr{t}{x}\geq\frac{1}{a^n}$, 
    since $S_n$ is $\frac{1}{a^n}$-separated and $t\neq x\in T_n\cap G_n=S_n$.
    For any $t\in T_n\setminus\{x\}$ we have the following estimations
    \begin{align*}
        \mtr{u_n}{t} &\geq \mtr{t}{x}-\mtr{u_n}{x} \\
        &\geq \frac{1}{a^n}-\frac{1}{2a^n}=\frac{1}{2a^n}>\rho_n=\mtr{u_n}{x}.
    \end{align*}
    This shows, that 
    $\varphi_n(u_n)=\min\limits_{t\in T_n}\mtr{u_n}{t}=\mtr{u_n}{x}=\rho_n$.
    Note, that
    \[
        r_n(u_n)=\sum_{k=n}^\infty b^k\varphi_k(u_n)\geq b^n\varphi_n(u_n)=b^n\rho_n,
    \]
    which yields the following inequality
    \begin{equation}\label{eq:asdf}
        \abs{r_n(u_n)-r_n(x)}=r_n(u_n)\geq b^n\rho_n.
    \end{equation}
    Next, since 
    $s_n$ is a $\frac{b^n}{b-1}$-Lipschitz function 
    by Proposition \ref{prop:TWfunction_is_Lipschitz}$(iii)$,
    we conclude  
    \begin{equation}\label{eq:zxcv}
        \abs{s_n(u_n)-s_n(x)}\leq\frac{b^n}{b-1}\mtr{u_n}{x}=\frac{b^n\rho_n}{b-1}.
    \end{equation}
    Now, we obtain
    \begin{align*}
        \abs{f(u_n)-f(x)} &= \abs{\big(r_n(u_n)+s_n(u_n)\big)-\big(r_n(x)+s_n(x)\big)} \\
        &= \abs{\big(r_n(u_n)-r_n(x)\big)+\big(s_n(u_n)-s_n(x)\big)} \\
        &\geq \abs{r_n(u_n)-r_n(x)} - \abs{s_n(u_n)-s_n(x)} \\
        &\geq b^n\rho_n-\frac{b^n\rho_n}{b-1}=\frac{b-2}{b-1}b^n\rho_n.
    \end{align*}
    Finally, we have 
    $$\displaystyle\frac{1}{\rho_n}\abs{f(u_n)-f(x)}\geq\frac{b-2}{b-1}b^n\xrightarrow{n\to\infty}\infty,$$
    because $b>2$.
    We have found the sequence $\seq[0]{u}{n}$, such that $u_n\to x$ and
    \[
        \lim_{n\to\infty}\frac{\abs{f(u_n)-f(x)}}{\mtr{u_n}{x}}=\infty,
    \]
    which means that 
    \[
        \Lip f(x)=\limsup_{u\to x}\frac{\abs{f(u)-f(x)}}{\mtr{u}{x}}=\infty.
    \]

    \textit{The second case:} $\displaystyle x\notin\bigcup_{n=0}^\infty S_n$.  Fix $n\in\bN$.
    So, $x\in G_n=\ball[c/a^n]{A}$ and $x\notin S_n$.
    Hence, $x\notin\complement G_n\cup S_n= T_n$.
    Then $\varphi_n(x)>0$. Since $\frac{b}{2}>1$,
    we have $\inf\limits_{t\in T_n}\mtr{x}{t}=\varphi_n(x)<\frac{b}{2}\varphi_n(x)$.
    So, there exists $t_n\in T_n$ such that $\rho_n=\mtr{t_n}{x}<\frac{b}{2}\varphi_n(x)$.
    Since $T_n$ is an $\frac{1}{a^n}$-dense set,
    Remark \ref{rem:epsilon_dense_lip} yields the inequality
    $\varphi_n(x)<\frac{1}{a_n}$.
    Then $\rho_n<\frac{b}{2a^n}$.
    Next, observe that $\varphi_k(t_n)=0$ for $k\geq n$, since
    $t_n\in T_n\subseteq T_k$. Therefore,
    $r_n(t_n)=0$.
    Similarly to the previous case, we obtain
    \begin{align*}
        \abs{f(x)-f(t_n)} &= \abs{r_n(x)+s_n(x)-r_n(t_n)-s_n(x)} \\
        &\geq r_n(x)-\abs{s_n(x)-s_n(t_n)} \\
        &\geq b^n\varphi_n(x)-\frac{b^n}{b-1}\mtr{x}{t_n} \\
        &\geq b^n\rho_n\frac{2}{b}-\frac{b^n\rho_n}{b-1}=\frac{b-2}{b(b-1)}b^n\rho_n.
    \end{align*}
    Thus, $\mtr{x}{t_n}=\rho_n\to 0$ and $\frac{1}{\rho_n}\abs{f(x)-f(t_n)}\geq\frac{b-2}{b(b-1)}b^n\to\infty$.
    Again, we have $\Lip f(x)=\infty$.

    Thus far, combining both cases,
    we have shown that $A\subseteq L^{\infty}(f)$. 
    Then $A\subseteq L^{\infty}(f)\subseteq\bL^{\infty}(f)$.
    We will prove, that $\bL^{\infty}(f)\subseteq A$.
    Consider $x\in X\setminus A$.
    Then, there exists $n_0\in\bN_0$ such that
    $x\notin\closure{G_{n_0}}$. The set $U=\complement\closure{G_{n_0}}$ is
    a neighborhood of the point $x$ and for any $u\in U$ we have
    $u\in\complement{\closure{G_{n_0}}}\subseteq T_{n_0}\subseteq T_k$,
    so $\varphi_k(u)=0$ for any $k\geq n_0$.
    This means that $f(u)=s_{n_0}(u)$ for any $u\in U$. 
    Therefore, by Proposition~\ref{prop:TWfunction_is_Lipschitz}$(iii)$ we conclude that
    \[\LLip f(x)=\LLip s_{n_0}(x)\leq\dfrac{b^{n_0}}{b-1}<\infty.\] 
    This means, that $x\in X\setminus\bL^{\infty}(f)$.
    We have shown that $X\setminus A\subseteq X\setminus\bL^{\infty}(f)$,
    therefore $\bL^{\infty}(f)\subseteq A$. 
    Thus, $A=L^{\infty}(f)=\bL^{\infty}(f)$.
\end{proof}


The previous theorem allow us to give a solution
of the $L$-$\bL$-problem.
\begin{thm}
    Let $A$ be a subset of a metric space $X$. 
    Then, the following
    conditions are equivalent:
    \begin{enumerate}[label=\normalfont{(\roman*)}]
        \item $A$ is a closed set which does not contain isolated points 
             of $X$, \label{it:A_is_perfect}
        \item there exists a continuous function $f\colon X\to\bR$
        such that \[\bL^{\infty}(f)=L^{\infty}(f)=A.\] \label{it:A_is_2Lf_set}
    \end{enumerate}
\end{thm}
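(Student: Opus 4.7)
The plan is to observe that both directions are essentially immediate consequences of what has already been established, so most of the work is bookkeeping.

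For the implication \ref{it:A_is_perfect} $\Rightarrow$ \ref{it:A_is_2Lf_set}, I would simply invoke Theorem~\ref{thm:first_problem}. Given a closed set $A \subseteq X^d$, choose any admissible parameters, for instance $a = 4$, $b = 3$, $c = 1$, so that $a > b > 2$ and $c \geq 1$. Construct the sequence $\seq[0]{T}{n}$ recursively as in the discussion following the definition of centered TW-functions, ensuring the type is monotone. The resulting $f$ is continuous by Proposition~\ref{prop:TWfunction_is_Lipschitz}(i), and Theorem~\ref{thm:first_problem} yields $L^\infty(f) = \bL^\infty(f) = A$, which is exactly condition \ref{it:A_is_2Lf_set}.

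For the converse \ref{it:A_is_2Lf_set} $\Rightarrow$ \ref{it:A_is_perfect}, suppose $f \colon X \to \bR$ is a continuous function with $\bL^\infty(f) = L^\infty(f) = A$. First, $A$ is closed: Proposition~\ref{prop:Linfty_sets_closed} asserts that $\bL^\infty(f)$ is closed for any continuous $f$, and $A = \bL^\infty(f)$. Next, $A$ contains no isolated point of $X$: if $x \in X$ is isolated then, by the convention $\sup \varnothing = 0$ adopted in Section~2, we have $\Lip f(x) = 0 < \infty$, so $x \notin L^\infty(f) = A$. Equivalently, $A \subseteq X \setminus (X \setminus X^d) = X^d$.

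There is no real obstacle here, since the nontrivial work — the construction of the centered Takagi--van der Waerden function and the verification of $L^\infty(f) = \bL^\infty(f) = A$ — has already been carried out in Theorem~\ref{thm:first_problem}, and the elementary structural facts about isolated points and the closedness of $\bL^\infty(f)$ are the content of Proposition~\ref{prop:Linfty_sets_closed} and the convention on empty suprema. The only point that requires a moment's attention is making sure the parameter conditions $a > b > 2$, $c \geq 1$ required by Theorem~\ref{thm:first_problem} are satisfiable, which is obvious.
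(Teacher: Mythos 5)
Your proposal is correct and takes essentially the same route as the paper, whose proof simply cites Proposition~\ref{prop:Linfty_sets_closed} and Theorem~\ref{thm:first_problem}; you merely spell out the details (choice of admissible parameters and a monotone-type construction for the forward direction, closedness of $\bL^{\infty}(f)$ and the $\sup\varnothing=0$ convention at isolated points for the converse). Nothing further is needed.
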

\begin{proof}
    Follows from Proposition~\ref{prop:Linfty_sets_closed}
    and Theorem~\ref{thm:first_problem}.
\end{proof}


\section{Solution of $\ell$-$L$-$\bL$-problem in a hermetic space}

The \emph{hermeticity of a metric space $X$ at a point $x\in X^d$} 
was defined in \cite{MaWo} by the formula
\[
    H(X,x)=\liminf_{r\to 0^{+}}\frac{1}{r}\sup_{u\in\ball[r]{x}}\mtr{u}{x},
\]
and the \emph{hermeticity of $X$} as the number $H(X)=\inf\limits_{x\in X^d}H(X,x)$.
We say, that $X$ is hermetic if $H(X)>0$.

If we denote by $d_x(u):=\mtr{x}{u}$, then $H(X,x)=\lip d_x(x)$, $x\in X$.
As a result, we get
\begin{rem}\label{rem:hermetic_as_well}
    Let $Y$ be an open subspace of a metric space $X$
    and $x\in Y$. Then $H(x,Y)=H(x,X)$. In particular,
    an open subset of a hermetic space is hermetic as
    well.
\end{rem}

The next proposition gives a geometric intuition behind hermeticity.

\begin{prop}[{\cite[Proposition 6.2]{MaWo}}]\label{prop:hermeticity_equivalent}
    Let $X$ be a hermetic metric space, 
    $x\in X$ and $0<\lambda<H(X,x)$. 
    Then there exists a number $R=R(X,x,\lambda)>0$ such that
    for any $r\in\ocinv{0}{R}$ there exists $u\in X$ 
    with
    \begin{equation}\label{eq:hermeticity_equivalent}
        \lambda r\leq\mtr{x}{u}\leq r.
    \end{equation}
\end{prop}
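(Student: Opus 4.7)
My plan is to translate the inequality $H(X,x)>\lambda$, which is a statement about a $\liminf$, into a concrete lower bound $\sup_{u\in\ball[r]{x}}\mtr{u}{x}>\lambda r$ valid uniformly on some initial interval $\ocinv{0}{R}$, and then extract the desired point $u$ directly from the definition of supremum.

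The first step is the $\liminf$-unwinding. Since $\liminf_{r\to 0^+}g(r)=\sup_{\delta>0}\inf_{0<r<\delta}g(r)$, the hypothesis $H(X,x)>\lambda$ supplies some $\delta>0$ with $\inf_{0<r<\delta}g(r)>\lambda$, where I set $g(r)=\frac{1}{r}\sup_{u\in\ball[r]{x}}\mtr{u}{x}$. In particular $g(r)>\lambda$ for every $r\in\oinv{0}{\delta}$, so taking $R:=\delta/2$ I obtain, for every $r\in\ocinv{0}{R}$,
\[
    \sup_{u\in\ball[r]{x}}\mtr{u}{x}>\lambda r.
\]

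The second step is immediate: by definition of supremum, for each such $r$ there exists $u\in\ball[r]{x}$ whose value $\mtr{u}{x}$ strictly exceeds $\lambda r$. Since $u$ lies in the \emph{open} ball $\ball[r]{x}$, we also have $\mtr{u}{x}<r$, and combining the two bounds yields $\lambda r<\mtr{x}{u}<r$, which certainly implies the required inequality \eqref{eq:hermeticity_equivalent}. I do not anticipate any genuine obstacle: the proof is essentially a definitional translation, and the only mildly subtle point is the standard $\liminf$ manipulation of the first step. No completeness, compactness, or further structural assumption on $X$ beyond the definition of the open ball is needed.
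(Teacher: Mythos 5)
Your proof is correct: unwinding the $\liminf$ in the definition of $H(X,x)$ to get $\sup_{u\in\ball[r]{x}}\mtr{u}{x}>\lambda r$ for all small $r$, and then pulling a witness $u$ out of the (open-ball) supremum, is exactly the definitional argument this statement calls for, and the strict inequalities you obtain imply the stated non-strict ones. The paper does not reprove the proposition but imports it from \cite{MaWo}, and your argument is essentially the standard one given there, so there is nothing to add.
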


The following lemma is important for the proof of our main theorem.
\begin{lemm}[{\cite[Lemma 6.4]{MaWo}}]\label{lemm:v}
    Suppose that $Y$ is a metric space, $x\in Y^d$ and $H(Y,x)>0$.
    Fix real numbers $0<\lambda<H(Y,x)$ and $0<\varepsilon\leq R$, where $R=R(Y,x,\lambda)$
    is the number as in Proposition \ref{prop:hermeticity_equivalent}.
    Moreover, let $E$ be a maximal $\varepsilon$-separated set in $Y$
    and denote $\psi(y)=d(y,E)$ for any $y\in Y$.
    Then, there exists $u\in\cball[\varepsilon]{x}$ such that
    \[
        \abs{\psi(u)-\psi(x)}\geq\frac{\lambda\varepsilon}{8}.
    \]
\end{lemm}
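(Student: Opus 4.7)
The plan is to analyze $\psi:=d(\cdot,E)$ directly, splitting into two cases according to the size of $\psi(x)$; the threshold will be $\lambda\varepsilon/8$. Two preliminary observations drive everything: $\psi$ is $1$-Lipschitz, and $\psi(x)<\varepsilon$ because the maximal $\varepsilon$-separated set $E$ is $\varepsilon$-dense. Also $H(Y,x)\le 1$ straight from the definition of $H$, so $\lambda<1$, a fact that is used at the final step.

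For the first case, $\psi(x)\ge\lambda\varepsilon/8$, I would simply pick $e^*\in E$ with $d(x,e^*)<\varepsilon$ (possible since $\psi(x)<\varepsilon$ and by the definition of the infimum); then $u:=e^*\in\ball[\varepsilon]{x}\subseteq\cball[\varepsilon]{x}$, $\psi(u)=0$, and the required inequality $|\psi(u)-\psi(x)|=\psi(x)\ge\lambda\varepsilon/8$ is immediate.

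For the second case, $\psi(x)<\lambda\varepsilon/8$, I would apply Proposition~\ref{prop:hermeticity_equivalent} with $r=\varepsilon/2\le R$ to produce $u$ with $\lambda\varepsilon/2\le d(u,x)\le\varepsilon/2$, and then bound $\psi(u)$ from below by handling each $e\in E$ separately. Since $\psi(x)<\varepsilon/2$, the $\varepsilon$-separation of $E$ forces the infimum to be attained at a unique $e^*\in E$ with $d(x,e^*)=\psi(x)$. The reverse triangle inequality gives $d(u,e^*)\ge d(u,x)-\psi(x)\ge\lambda\varepsilon/2-\lambda\varepsilon/8=3\lambda\varepsilon/8$; for every other $e\in E$, combining $d(e,e^*)\ge\varepsilon$ with $d(u,e^*)\le\varepsilon/2+\lambda\varepsilon/8\le 5\varepsilon/8$ yields $d(u,e)\ge\varepsilon-d(u,e^*)\ge 3\varepsilon/8$. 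Since $\lambda<1$, taking the infimum gives $\psi(u)\ge 3\lambda\varepsilon/8$, whence $\psi(u)-\psi(x)>\lambda\varepsilon/4\ge\lambda\varepsilon/8$.

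The main obstacle I expect is conceptual rather than computational: Proposition~\ref{prop:hermeticity_equivalent} supplies a point at a controlled distance from $x$ but in an entirely uncontrolled direction, so one cannot literally ``move $u$ away from $e^*$'' to make $\psi(u)$ large. The observation that resolves this is that $\varepsilon$-separation of $E$ automatically keeps $u$ far from every $e\ne e^*$ once the step radius is chosen smaller than $\varepsilon$ (here $r=\varepsilon/2$), so no direction-control is needed and the only point requiring attention is $e^*$ itself, handled by the reverse triangle inequality. The precise threshold $\lambda\varepsilon/8$ falls out of requiring $d(u,x)$ to exceed $\psi(x)$ by a definite margin, which is why the case split occurs at $\psi(x)=\lambda\varepsilon/8$ rather than at the larger natural threshold $\varepsilon/2$.
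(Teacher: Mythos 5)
Your proof is correct: the case split at $\psi(x)=\lambda\varepsilon/8$, the application of Proposition~\ref{prop:hermeticity_equivalent} with $r=\varepsilon/2$, and the $\varepsilon$-separation argument (which shows the nearest point $e^*\in E$ is the only one requiring the reverse triangle inequality, all other points of $E$ staying at distance at least $3\varepsilon/8$ from $u$) all check out, and in fact give the slightly stronger bound $\lambda\varepsilon/4$ in the second case. This is essentially the same route as the proof of Lemma~6.4 in \cite{MaWo}, which the present paper cites rather than reproduces, so there is nothing to add.
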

Now we can solve $\ell$-$L$-$\bL$ problem for a hermetic space.
\begin{thm}
    Let $X$ be a hermetic metric space
    and let $A\subseteq X^d$ be a closed subset of $X$.
    Let $a>b>1+\frac{8}{H(X)}$ be real numbers. 
    Then, there exists $c>0$ such that for
    $TW$-function $f\colon X\to\bR$ of type $(a,b,c)$
    centered at set $A$, equalities
    $\ell^{\infty}(f)=L^{\infty}(f)=\bL^{\infty}(f)=A$
    hold.
\end{thm}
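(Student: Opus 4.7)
The plan is to reduce to Theorem~\ref{thm:first_problem} and then upgrade the conclusion to the little Lipschitz derivative via a hermeticity argument. Since in any metric space one has $H(X)\leq 1$, the hypothesis $b>1+8/H(X)\geq 9$ already implies $a>b>2$; choosing $c\geq 1$ thus places us in the setting of Theorem~\ref{thm:first_problem} and yields $L^{\infty}(f)=\bL^{\infty}(f)=A$. The inequality $\lip f\leq \Lip f$ gives $\ell^{\infty}(f)\subseteq L^{\infty}(f)=A$ for free, so the only remaining task is to prove $A\subseteq\ell^{\infty}(f)$, i.e.\ $\lip f(x)=\infty$ for every $x\in A$.

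Fix $x\in A\subseteq X^d$. First I would pick $\lambda\in(0,H(X,x))$ so close to $H(X)$ that $\lambda(b-1)>8$, which is possible by the hypothesis $b>1+8/H(X)$. Next I would choose $c$ large enough (say $c\geq 3$) so that for any $u\in\cball[1/a^n]{x}$ the distance $d(u,F_n)\geq(c-1)/a^n$ exceeds $d(u,S_n)\leq 2/a^n$; consequently $\varphi_n(u)=d(u,S_n)$, and similarly $\varphi_n(x)=d(x,S_n)$. Extending $S_n$ to a maximal $1/a^n$-separated set $S'_n\supseteq S_n$ in $X$ (the new points lie in $F_n$, far from $x$) then identifies $\varphi_n$ with the function $\psi$ of Lemma~\ref{lemm:v} at the relevant points.

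Given small $r>0$, pick $n$ with $1/a^n<r\leq 1/a^{n-1}$ and set $\varepsilon=1/a^n$. For $n$ large enough, Lemma~\ref{lemm:v}, applied in $Y=X$ with $E=S'_n$, provides $u\in\cball[\varepsilon]{x}\subseteq\ball[r]{x}$ with $|\varphi_n(u)-\varphi_n(x)|\geq\lambda/(8a^n)$. If $x\in\bigcup_k S_k$, say $x\in S_{n_0}$, then by monotonicity of $(T_k)$ we have $\varphi_k(x)=0$ for all $k\geq n_0$; hence $r_n(x)=0$, $\varphi_n(u)\geq\lambda/(8a^n)$, and $r_n(u)\geq b^n\varphi_n(u)$ since every term of $r_n$ is non-negative. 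Combined with the Lipschitz bound $|s_n(u)-s_n(x)|\leq b^n\varepsilon/(b-1)$ from Proposition~\ref{prop:TWfunction_is_Lipschitz}(iii) and with $1/r\geq a^{n-1}$, one gets
\[
    \frac{|f(u)-f(x)|}{r}\geq\frac{b^n}{a}\left(\frac{\lambda}{8}-\frac{1}{b-1}\right)\xrightarrow{n\to\infty}\infty,
\]
so $\lip f(x)=\infty$ in this case.

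The main obstacle will be the remaining case $x\notin\bigcup_k S_k$ (so $\varphi_k(x)>0$ for all $k$), in which $r_n(x)\neq 0$ and the direct estimate via Lemma~\ref{lemm:v} acquires a cross-term $|r_{n+1}(u)-r_{n+1}(x)|$ of order $b^{n+1}/((a-b)a^n)$ of indeterminate sign, which a priori may swamp the main contribution. I would handle this by a further case split on the size of $\varphi_n(x)$: when $\varphi_n(x)$ is comparable to $1/a^n$, use a point $t_n\in T_n$ with $d(t_n,x)<(b/2)\varphi_n(x)$ exactly as in the proof of Theorem~\ref{thm:first_problem} (Case~2), where $r_n(t_n)=0$ by monotonicity so the tail vanishes identically and the $(b-2)/(b-1)$-bound survives; when $\varphi_n(x)$ is much smaller than $\lambda/(16a^n)$, Lemma~\ref{lemm:v} is forced by sign to provide $u$ with $\varphi_n(u)\geq\lambda/(8a^n)$, and the argument of the previous paragraph essentially carries through. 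Tuning $c$ sufficiently large keeps the only genuinely surviving constraint as $\lambda/8>1/(b-1)$, exactly what the hypothesis $b>1+8/H(X)$ provides.
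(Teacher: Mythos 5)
Your reduction to Theorem~\ref{thm:first_problem}, the identification $\varphi_n=d(\cdot,S_n)=d(\cdot,S_n')$ on $\cball[1/a^n]{x}$ for $c\ge 3$, and the case $x\in\bigcup_k S_k$ are essentially sound (in the sub-case where $\varphi_n(x)$ is comparable to $1/a^n$ you must in addition take $t_n$ with $\mtr{t_n}{x}<\min\{r,\tfrac b2\varphi_n(x)\}$, which is possible since $\varphi_n(x)<1/a^n<r$; with only $\mtr{t_n}{x}<\tfrac b2\varphi_n(x)$ the point $t_n$ need not lie in $\ball[r]{x}$). The genuine gap is the remaining sub-case $0<\varphi_n(x)<\lambda/(16a^n)$, where you assert the Case-A argument ``essentially carries through''. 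What made Case A work was not $\varphi_n(x)=0$ but $r_n(x)=0$: the entire tail vanished. In this sub-case the tail difference $\abs{r_{n+1}(u)-r_{n+1}(x)}$ is back, and the only available bound is Proposition~\ref{prop:TWfunction_is_Lipschitz}(ii), namely $2b^{n+1}/\big((a-b)a^n\big)$, while the gain delivered by Lemma~\ref{lemm:v} at your fixed scale $\varepsilon=1/a^n$ is at most $b^n\abs{\varphi_n(u)-\varphi_n(x)}\le b^n/a^n$, because $T_n$ is $1/a^n$-dense and hence $0\le\varphi_n\le 1/a^n$ everywhere. So your estimate actually requires $\frac{\lambda}{8}>\frac{1}{b-1}+\frac{2b}{a-b}$, and since $\lambda\le H(X)\le 1$ this forces $a>17b$, whereas the theorem allows $a-b$ arbitrarily small. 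Tuning $c$ cannot repair this: $c$ only pushes $F_n$ away from $A$; it changes neither the tail bound of Proposition~\ref{prop:TWfunction_is_Lipschitz}(ii) nor the cap $\varphi_n\le 1/a^n$, and enlarging the displacement instead would inflate the $s_n$-term while the attainable oscillation of $\varphi_n$ stays capped at $1/a^n$. Thus your closing claim that ``the only genuinely surviving constraint is $\lambda/8>1/(b-1)$'' is precisely the step that is not proved.

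For comparison, the paper closes this point with no case analysis at all: it fixes an auxiliary $\alpha$ with $\frac1{b-1}+\frac{2b}{\alpha(a-b)}<\frac{H(X)}8$, chooses $c\ge\alpha+1$, and runs the hermeticity argument at the enlarged scale $\varepsilon_n=\alpha/a^n$ — this is exactly where the freedom ``there exists $c>0$'' is spent, with $c$ growing as $a-b$ shrinks. At that scale the main term $\frac{\lambda}{8}b^n\varepsilon_n$ scales with $\alpha$ while the tail bound $2b^{n+1}/\big((a-b)a^n\big)=\frac{2b}{\alpha(a-b)}b^n\varepsilon_n$ does not, so the tail is absorbed uniformly in $x\in A$, with no reference to whether $x$ lies in some $S_k$ and, notably, with no monotonicity assumption. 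Your argument, by contrast, uses monotone type twice (through Theorem~\ref{thm:first_problem} and through $\varphi_k(x)=0$ for $k\ge n_0$ and $r_n(t_n)=0$), which the statement does not grant — a secondary and repairable defect if the theorem is read as an existence claim, but worth noting. Some mechanism of this kind, coupling $c$ (equivalently, the scale at which Lemma~\ref{lemm:v} is exploited) to the size of $a-b$, is the idea missing from your proposal.
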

\begin{proof} Observe that $\frac{1}{b-1}<\frac{H(X)}{8}$.
Let $\alpha>0$ be such that
\[
    \frac{1}{b-1}+\frac{2b}{\alpha(a-b)}<\frac{H(X)}{8}.
\]
Then, there exists $\lambda<H(X)$ such that
\[
    \frac{1}{b-1}+\frac{2b}{\alpha(a-b)}<\frac{\lambda}{8}.
\]

Choose $c\geq \alpha+1$ and let $f$ be TW-function of $(a,b,c)$-type, centered at set $A$.
Fix $x\in A$. We shall prove, that $\lip f(x)=\infty$.

Note that $\lambda<H(X)\le H(X,x)$. 
Let $R=R(X,x,\lambda)>0$ be chosen as is  Proposition~\ref{prop:hermeticity_equivalent}.
Put $\varepsilon_n=\frac{\alpha}{a^n}$ for $n\in\bN$ and 
$\delta=\min\left\{\alpha,R\right\}$.
For any $0<r<\delta$  there exists
$n(r)\in\bN$ such that $\varepsilon_{n(r)}\leq r<\varepsilon_{n(r)-1}$.
Therefore, $\lim\limits_{r\to 0+}\varepsilon_{n(r)}=0$ and so, $\lim\limits_{r\to 0+}n(r)=\infty$. 

Fix $0<r<\delta$. To simplify, we will denote $n=n(r)$.
Let $Y=G_n=\ball[\varepsilon_n]{A}$, 
$E=S_n$, $\psi=d(\,\cdot\, ,S_n)$ and $\varepsilon=\varepsilon_n$.
Since $Y$ is an open subspace of $X$, by Remark~\ref{rem:hermetic_as_well}
we have $\lambda\leq H(X,x)=H(Y,x)$. 
Set $R_Y=\varepsilon$. Then $\ball[R_Y]{x}\subseteq Y$. Since $R_Y\in(0;R]$, the number $R_Y=R(Y,\lambda,x)$ satisfies the thesis of Proposition~\ref{prop:hermeticity_equivalent} in the space $Y$.
Now, we apply Lemma \ref{lemm:v} for $Y$, $E$, $\varepsilon$ and $\psi$ as defined above.
So, there is $u\in\cball[\varepsilon]{x}\subseteq G_n$ satisfying
\begin{equation}\label{rarirurero}
    \abs{\psi(u)-\psi(x)}\geq\frac{\lambda\varepsilon}{8}=\frac{\lambda \varepsilon_n}{8}.
\end{equation}
We want to show, that $\psi(v)=\varphi_n(v)$ for any $v\in\cball[\varepsilon]{x}$.
If $t\in\complement{G_n}$, 
then $\mtr{t}{x}\geq\frac{c}{a^n}$. But $\mtr{x}{v}\le\varepsilon=\frac{\alpha}{a^n}$ and $c-\alpha\ge 1$. Hence
\[
    \mtr{t}{v} \geq \mtr{t}{x}-\mtr{x}{v}\geq\frac{c}{a^n}-\frac{\alpha}{a^n}=\frac{c-\alpha}{a^n}\geq\frac{1}{a^n}.
\]
On the other hand, $\mtr{v}{s}\leq\frac{1}{a^n}$ for $s\in S_n$, 
since $v\in G_n$ and $S_n$ is a $\frac{1}{a^n}$-dense set in $G_n$.
Taking the above estimations into account, we get
\begin{align*}
    \psi(v)=d(v,S_n) &= \inf_{s\in S_n}\mtr{s}{v} \\
    & = \min\left\{\inf_{s\in S_n}\mtr{v}{s},\inf_{t\in\complement{G_n}}\mtr{v}{t}\right\} \\
    & = \inf_{t\in T_n}\mtr{v}{t} = d(v,T_n)=\varphi_n(v).
\end{align*}
In particular, for $v=x$ we have $\psi(x)=\varphi_n(x)$
and for $v=u$, $\psi(u)=\varphi_n(u)$.
Now \eqref{rarirurero} implies, that
\begin{equation}\label{sasisuseso}
    \abs{\varphi_n(u)-\varphi_n(x)}\geq\frac{\lambda\varepsilon_n}{8}.
\end{equation}
Using Proposition~\ref{prop:TWfunction_is_Lipschitz}$(iii)$ we get the inequality
\begin{equation}\label{eq:s_n_estimate}
    \abs{s_n(x)-s_n(u)}\leq\frac{b^n}{b-1}\mtr{x}{u}\leq\frac{b^n}{b-1}\varepsilon_n=\frac{1}{b-1}b^n\varepsilon_n.
\end{equation}
By Proposition~\ref{prop:TWfunction_is_Lipschitz}$(ii)$ we get
\begin{align}\label{eq:r_n_estimate}
    \abs{r_{n+1}(x)-r_{n+1}(u)}&\leq\abs{r_{n+1}(x)}+\abs{r_{n+1}(u)}\nonumber \\
    & \leq 2\frac{b^{n+1}}{(a-b)a^n}=\frac{2b}{\alpha(a-b)}b^n\varepsilon_n.
\end{align}
Using \eqref{sasisuseso}, \eqref{eq:r_n_estimate} and \eqref{eq:s_n_estimate} we can now estimate
\begin{align*}
    \abs{f(x){-}f(u)} &= \abs{\big(s_n(x)+b^n\varphi_n(x)+r_{n+1}(x)\big)-\big(s_n(u)+b^n\varphi_n(u)+r_{n+1}(u)\big)} \\
    &\geq b^n\abs{\varphi_n(x)-\varphi_n(u)}-\abs{s_n(x)-s_n(u)}-\abs{r_{n+1}(x)-r_{n+1}(u)} \\
    &\geq \frac{\lambda}{8}b^n\varepsilon_n-\frac{1}{b-1}b^n\varepsilon_n-\frac{2b}{\alpha(a-b)}b^n\varepsilon_n \\
    &= \left(\frac{\lambda}{8}-\frac{1}{b-1}-\frac{2b}{\alpha(a-b)}\right)b^n\varepsilon_n \\
    &=\gamma b^n\varepsilon_n = \frac{\gamma}{a}b^n\varepsilon_{n-1}>\frac{\gamma}{a}b^nr,
\end{align*}
where $\gamma=\frac{\lambda}{8}-\frac{1}{b-1}-\frac{2b}{\alpha(a-b)}$.
Hence,
\[
    \Lip^rf(x)\geq\frac{\abs{f(x)-f(u)}}{r}\geq \frac{\gamma}{a}b^n, \text{ for any }r<\delta.
\]
Since $\lim\limits_{r\to 0+}n(r)=\infty$, $b>1$ and $\gamma>0$, we have
\[
    \lip f(x)=\liminf_{r\to 0^{+}}\Lip^rf(x)\geq\lim_{r\to 0^{+}}\frac{\gamma}{a}b^{n(r)}=\infty.
\]
Thus, we have shown that $\lip f(x)=\infty$ for arbitrarily chosen $x\in A$.
This means, that $A\subseteq\ell^{\infty}(f)$. But the same reasoning as in
theorem \ref{thm:first_problem} can be applied here to show that 
$\bL^{\infty}(f)\subseteq A$. 
To summarize, we have proven, that
\[
    A\subseteq\ell^{\infty}(f)\subseteq L^{\infty}(f)\subseteq\bL^{\infty}(f)\subseteq A,
\]
hence $A=\ell^{\infty}(f)=L^{\infty}(f)=\bL^{\infty}(f)$.
\end{proof}

\begin{thm}
    Let $X$ be a hermetic metric space and $A\subseteq X$.
    Then, following conditions are equivalent:
    \begin{enumerate}[label=\normalfont{(\roman*)}]
        \item $A$ is a closed set which does not contain isolated points of $X$,
        \item there exists continuous function $f\colon X\to\bR$
        such that \[\ell^{\infty}(f)=\bL^{\infty}(f)=L^{\infty}(f)=A.\]
    \end{enumerate}
\end{thm}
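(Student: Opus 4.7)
The plan is to observe that this theorem is essentially a direct corollary of Proposition~\ref{prop:Linfty_sets_closed} together with the preceding theorem, which already constructed a continuous function with the required property for any closed subset of $X^d$ in a hermetic space.

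For the implication \ref{it:A_is_2Lf_set}${}\Rightarrow{}$\ref{it:A_is_perfect}, I would proceed as follows. First, since $f$ is continuous, Proposition~\ref{prop:Linfty_sets_closed} gives that $\bL^{\infty}(f)$ is closed, so $A = \bL^{\infty}(f)$ is closed in $X$. Second, to show $A$ contains no isolated points of $X$, I would recall the convention $\sup\varnothing = 0$ made in Section 2, which forces $\lip f(x) = \Lip f(x) = \LLip f(x) = 0$ for any isolated point $x \in X$. Consequently no isolated point can lie in $\ell^{\infty}(f) = A$, so $A \subseteq X^d$.

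For the implication \ref{it:A_is_perfect}${}\Rightarrow{}$\ref{it:A_is_2Lf_set}, the work has already been done in the previous theorem. Since $X$ is hermetic, $H(X) > 0$, so I can choose real numbers $a > b > 1 + \frac{8}{H(X)}$. The previous theorem then supplies a constant $c > 0$ and a $TW$-function $f\colon X \to \bR$ of type $(a,b,c)$ centered at $A$ satisfying
\[
    \ell^{\infty}(f) = L^{\infty}(f) = \bL^{\infty}(f) = A.
\]
Continuity of $f$ follows from Proposition~\ref{prop:TWfunction_is_Lipschitz}(i).

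There is essentially no obstacle here: the theorem is a clean corollary of the two preceding results, and the only point that requires a moment's care is the observation about isolated points, which is handled purely by the convention on suprema of the empty set. Hence the proof should amount to citing Proposition~\ref{prop:Linfty_sets_closed} and the previous theorem, plus the one-line isolated-point remark.
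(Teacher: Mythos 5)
Your proposal is correct and follows exactly the route the paper intends: the paper states this theorem without proof because, like its $L$-$\bL$ counterpart, it follows immediately from Proposition~\ref{prop:Linfty_sets_closed} (closedness of $\bL^{\infty}(f)$, plus the convention that Lipschitz derivatives vanish at isolated points) together with the preceding hermetic-space theorem. Nothing is missing.
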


The question of whether the assumption of hermeticity of $X$ in the above theorem  is essential remains open.


\section{Statements and Declarations}

\textbf{Funding.} 
The research was supported by the University of Silesia Mathematics Department (Iterative Functional Equations and Real Analysis program).

\textbf{Competing Interests.} The authors have no relevant financial or non-financial interests to disclose.

\textbf{Author Contributions.} Whole text of the article was made by the authors. Every cited information was indicated in the references.   

\textbf{Data availability.} All data generated or analysed during this study are included in this published article.

\end{document}